\numberwithin{equation}{subsection}
\newtheorem{theorem}[subsection]{Theorem}
\newtheorem{lemma}[subsection]{Lemma}
\theoremstyle{definition}
\newtheorem{definition}[subsection]{Definition}
\newtheorem{remark}[subsection]{Remark}
\newtheorem{notation}[subsection]{Notation}
\newcommand{\ndet}{\mathrm{NormDet}}
\newcommand{\Z}{{\mathbb Z}}
\newcommand{\R}{{\mathbb R}}
\begin{document}
	
\title{An Efficient Version of the Bombieri-Vaaler Lemma
}
	\author{Jun Zhang}
	\address{Jun Zhang, School of Mathematical Sciences, Capital Normal University, Beijing 100048, P.R. China.}
	\email{junz@cnu.edu.cn}	
	\author{Qi Cheng}
	\address{Qi Cheng, School of Computer Science, University of Oklahoma, Norman, OK 73019, USA.}
	\email{qcheng@ou.edu}
	\date{\today}
	\begin{abstract}
    In their celebrated paper \textit{On Siegel's Lemma}, Bombieri and Vaaler
    found an upper bound on the height of integer solutions of
    systems of linear Diophantine equations. Calculating the bound
    directly, however, requires exponential time.
    In this paper, we present the bound in a different form
    that can be computed in polynomial time.
    We also give an elementary (and arguably simpler) proof for the bound.
  \end{abstract}
	\maketitle
	\section{Introduction}
	Solving Diophantine equations is at the heart of mathematics. It has applications
  in many branches of mathematics and computer science.
  The basic problem in the Diophantine
  theory is to solve a system of linear Diophantine equations.
	
	\begin{definition}
	   The system of linear Diophantine equations with integer coefficient matrix $A=(a_{i,j})\in \mathbb{Z}^{k\times n}\,(k<n)$ is
	   \begin{equation}\label{diophantine}
	     AX^T=0
	   \end{equation}
	   where $X^T$ is the transpose of the vector $X=(x_1,\cdots,x_n)$ of
     variables. The goal is to find solutions in the ring of integers.
	\end{definition}
	
	The system obviously has the zero solution $(0,0,\cdots, 0)$.
  The fundamental problem in the Diophantine approximation theory~\cite{cassels}
  asks how small a nonzero integer solution can be. It is also related to the shortest vector
  problem (SVP) in computational lattice theory~\cite{micciancio-goldwasser}.
	Siegel showed the following bound.
	\begin{theorem}[\cite{siegel}]
    System~\ref{diophantine} has a nonzero integer solution $X=(x_1,\cdots,x_n)\in \mathbb{Z}^n$ such that
		\[
		  \max_i|x_i| \leq 1+(na)^{\frac{k}{n-k}},
		\]
    where $a$ is an upper bound for absolute coefficients $|a_{i,j}|$ in $A$.
	\end{theorem}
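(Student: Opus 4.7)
The standard route is Dirichlet's box principle (pigeonhole) applied to the linear map $x\mapsto Ax^{T}$ restricted to a cube of lattice points.

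First, I would fix a positive integer parameter $B$ (to be chosen later) and consider the set
\[
S = \{\,x=(x_1,\dots,x_n)\in\Z^{n} \;:\; 0\le x_i\le B \text{ for all } i\,\},
\]
which has exactly $(B+1)^{n}$ elements. For each $x\in S$ and each row $i$, I would split the $i$-th row of $A$ into its positive and negative entries to see that the $i$-th coordinate of $Ax^{T}$ lies in some integer interval of length at most $B\cdot\sum_{j}|a_{i,j}|\le nBa$. Consequently the image $\{Ax^{T} : x\in S\}$ is contained in a set of cardinality at most $(nBa+1)^{k}$.

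Next comes the pigeonhole step: if one can choose $B$ so that
\[
(B+1)^{n} \;>\; (nBa+1)^{k},
\]
then two distinct vectors $x,y\in S$ share the same image, and $z=x-y$ is a nonzero integer solution of $AX^{T}=0$ satisfying $\max_i|z_i|\le B$. To obtain a clean $B$, I would use the crude estimate $nBa+1\le na(B+1)$ (valid as soon as $na\ge 1$), reducing the required inequality to $(B+1)^{n-k}>(na)^{k}$, i.e.\ $B+1>(na)^{k/(n-k)}$.

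Finally, I would choose $B=\lceil (na)^{k/(n-k)}\rceil$, which simultaneously satisfies $B+1>(na)^{k/(n-k)}$ (so the pigeonhole hypothesis holds) and $B\le 1+(na)^{k/(n-k)}$, yielding the claimed bound on $\max_i|x_i|$. The degenerate cases $na\le 1$ can be handled separately by direct inspection. The whole argument is essentially a counting estimate; the only delicate point is being careful that the inequality in the pigeonhole step is strict, which is why I would verify the passage from $(nBa+1)^{k}$ to $(na)^{k}(B+1)^{k}$ with strict inequality whenever it matters.
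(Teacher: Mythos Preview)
The paper does not prove this theorem; it is quoted as background with a citation to Siegel's original paper, so there is no ``paper's own proof'' to compare against. Your pigeonhole argument is the classical proof of Siegel's lemma and is correct as written: the count $(B+1)^n$ versus $(nBa+1)^k$, the reduction via $nBa+1\le na(B+1)$ for $na\ge 1$, and the choice $B=\lceil (na)^{k/(n-k)}\rceil$ all go through, and the edge case $na\le 1$ is easy to treat directly.
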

	
	Working on the ad\`eles, Bombieri and Vaaler applied the techniques from Geometry of Numbers,  and improved Siegel's bound.
	\begin{theorem}[\cite{bombieri-vaaler}]\label{BV1}
    Suppose that $A$ has full row rank $k$. System~\ref{diophantine} has a
    nonzero
    integer solution $X=(x_1,\cdots,x_n)\in \mathbb{Z}^n$ such that
		\[
		\max_i|x_i| \leq \left(D^{-1}\sqrt{\det(A\cdot A^T)}\right)^{\frac{1}{n-k}},
		\]
		where $D$ is the greatest common divisor (G.C.D.) of determinants of all $k\times k$ minors of $A$.
	\end{theorem}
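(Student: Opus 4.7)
The plan is to recast the integer solutions of $AX^T = 0$ as a lattice in $\R^n$ and combine the exact computation of its covolume with Minkowski's theorem for convex bodies, sharpened by Vaaler's cube-slicing inequality. Set $\Lambda = \{X \in \Z^n : AX^T = 0\}$. Since $A$ has full row rank $k$, the quotient $\Z^n/\Lambda$ embeds in $\Z^k$ and is therefore torsion-free, so $\Lambda$ is a primitive sublattice of $\Z^n$ of rank $n-k$; equivalently, the $\gcd$ of the $(n-k)\times(n-k)$ minors of any integer basis matrix of $\Lambda$ equals $1$.

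The first main step is to show that the covolume of $\Lambda$ in its real span $V$ is exactly $D^{-1}\sqrt{\det(AA^T)}$. Pick an integer basis $v_1,\dots,v_{n-k}$ of $\Lambda$, assemble it into a matrix $W \in \Z^{n \times (n-k)}$, and form the $n \times n$ block matrix $M$ whose first $k$ rows are $A$ and whose last $n-k$ rows are $W^T$. Since $AW = 0$, we have $MM^T = \mathrm{diag}(AA^T, W^TW)$, so $\det(M)^2 = \det(AA^T)\det(\Lambda)^2$. On the other hand, Laplace expansion along the first $k$ rows gives $\det(M) = \sum_J \pm \Delta_J(A)\,\tilde\Delta_J(W)$, where $\Delta_J(A)$ is the $k\times k$ minor of $A$ on columns $J$ and $\tilde\Delta_J(W)$ is the complementary $(n-k)\times(n-k)$ minor of $W^T$. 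The sequences $(\Delta_J(A))_J$ and $(\tilde\Delta_J(W))_J$ are Pl\"ucker coordinates of orthogonally complementary subspaces, hence proportional; comparing gcd's ($D$ versus $1$) pins down the proportionality constant as $\pm 1/D$. Substituting and applying Cauchy--Binet $\det(AA^T) = \sum_J \Delta_J(A)^2$ yields $\det(M) = \pm \det(AA^T)/D$, hence $\det(\Lambda) = D^{-1}\sqrt{\det(AA^T)}$.

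The second step is a standard Minkowski argument. Consider the symmetric convex body $K_c = [-c, c]^n$ and the lattice $\Lambda$ in $V = \mathrm{span}_\R(\Lambda)$, a subspace of dimension $n-k$. Minkowski's theorem guarantees a nonzero point of $\Lambda$ in $K_c$ provided $\mathrm{vol}_{n-k}(K_c \cap V) > 2^{n-k}\det(\Lambda)$. Vaaler's cube-slicing inequality $\mathrm{vol}_r([-1,1]^n \cap W) \geq 2^r$ for any $r$-dimensional subspace $W$ gives $\mathrm{vol}_{n-k}(K_c \cap V) \geq (2c)^{n-k}$, so the condition reduces to $c^{n-k} > \det(\Lambda)$. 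Approaching $c = (D^{-1}\sqrt{\det(AA^T)})^{1/(n-k)}$ from above and invoking discreteness of $\Lambda$ produces a nonzero integer solution with $\max_i |x_i|$ at most this value.

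The main obstacle is the proportionality-and-gcd argument in the first step: Cauchy--Binet, Laplace expansion, Minkowski's theorem, Vaaler's inequality, and the limiting argument are all classical off-the-shelf tools, but identifying the scaling between the two Pl\"ucker vectors as exactly $1/D$ is the arithmetic heart of the proof and the sole place where the primitivity of $\Lambda$ (and thus the gcd $D$) enters. If one wishes to avoid Pl\"ucker language, an alternative route is to diagonalize $A$ via the Smith normal form, reducing both $\det(AA^T)$ and $D$ to products of elementary divisors and reading off $\det(\Lambda)$ directly; I would attempt the Pl\"ucker version first for conceptual transparency, and fall back to Smith normal form if sign bookkeeping becomes troublesome.
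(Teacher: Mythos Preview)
Your proposal is correct and arrives at the same covolume formula $\det(\Lambda)=D^{-1}\sqrt{\det(AA^T)}$ for the kernel lattice, but the route to that formula is genuinely different from the paper's. The paper proceeds in two stages: first it takes the column Hermite Normal Form $A\cdot U=(H_A\,|\,0)$, reads off an explicit basis $(U_{2,1}^T,U_{2,2}^T)$ of the kernel lattice, and computes its Gram determinant by a block manipulation together with Sylvester's determinant identity, obtaining $\det(\Lambda^0)=\sqrt{\det(AA^T)}/\det(H_A)$; second, a short elementary-column-operations argument shows that $\det(H_A)=\det(L_C(A))=D$. Your argument instead stacks $A$ over $W^T$ into a square matrix $M$, gets $\det(M)^2=\det(AA^T)\det(\Lambda)^2$ from block orthogonality, and identifies $|\det(M)|=\det(AA^T)/D$ via the Hodge-duality proportionality of Pl\"ucker coordinates for orthogonal complements combined with Cauchy--Binet, using primitivity of $\Lambda$ to nail the constant. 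The paper's HNF route is more elementary (no exterior algebra, only Sylvester's identity) and, crucially for the paper's purpose, singles out the polynomial-time computable quantity $\det(L_C(A))$ in place of $D$; your Pl\"ucker route is more geometric and ties $D$ directly to $\det(\Lambda)$ in one step, at the cost of the sign and proportionality bookkeeping you already flag. The final Vaaler cube-slicing plus Minkowski step is the same in both treatments.
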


  They also proved a stronger version of Theorem~\ref{BV1},
  similar to Minkowski's second theorem for successive minima.
   	\begin{theorem}[\cite{bombieri-vaaler}]\label{BV2}
   	With the same hypotheses in Theorem~\ref{BV1}, System~\ref{diophantine} has $n-k$ linear independent integer solutions $X_j=(x_{j,1},\cdots,x_{j,n})\in \mathbb{Z}^n,\,j=1,\cdots,n-k$ such that
   	\[
   	\prod_{1\leq j \leq n-k} \max_i|x_{j,i}| \leq D^{-1}\sqrt{\det(A\cdot A^T)},
   	\]
   	where $D$ is the G.C.D. of determinants of all $k\times k$ minors of $A$.
   \end{theorem}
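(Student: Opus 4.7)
The plan is to realize the integer solutions as a lattice, identify its covolume, and then invoke Minkowski's second theorem on successive minima together with a cube-slicing estimate.

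\textbf{Step 1: The solution lattice.} Define $\Lambda := \{X \in \mathbb{Z}^n : AX^T = 0\}$. Since $A$ has full row rank $k$, $\Lambda$ is a free $\mathbb{Z}$-module of rank $n-k$ sitting inside the real subspace $V := \ker A \subseteq \mathbb{R}^n$. The first task is to establish the identity
\[
\det(\Lambda) \;=\; D^{-1}\sqrt{\det(A\cdot A^T)},
\]
where the left side is the covolume of $\Lambda$ with respect to the Euclidean metric induced from $\mathbb{R}^n$. I would prove this via the Smith Normal Form of $A$: writing $A = U\,(D_s \mid 0)\,V^{-1}$ with $U,V$ unimodular and $D_s = \mathrm{diag}(d_1,\dots,d_k)$, the lattice $\Lambda$ has an explicit basis given by the last $n-k$ columns of $V$, and then the Cauchy--Binet formula (applied both to $A A^T$ and to the basis matrix of $\Lambda$) together with the fact that $D = d_1\cdots d_k$ delivers the identity.

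\textbf{Step 2: Successive minima.} Let $K := [-1,1]^n \cap V$, a centrally symmetric convex body in the $(n-k)$-dimensional space $V$. By definition a nonzero $X \in V$ lies in $tK$ iff $\max_i|x_i| \leq t$, so the successive minima $\lambda_1 \leq \cdots \leq \lambda_{n-k}$ of the pair $(\Lambda,K)$ are exactly realized by linearly independent integer solutions $X_1,\dots,X_{n-k}$ with $\max_i|x_{j,i}| = \lambda_j$. Minkowski's second theorem then gives
\[
\lambda_1 \cdots \lambda_{n-k} \cdot \mathrm{vol}_{n-k}(K) \;\leq\; 2^{n-k}\,\det(\Lambda).
\]

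\textbf{Step 3: Slicing the cube.} To close the loop it remains to show $\mathrm{vol}_{n-k}(K) \geq 2^{n-k}$. This is precisely Vaaler's cube-slicing theorem: every central $d$-dimensional section of $[-1,1]^n$ has $d$-volume at least $2^d$. Applied to the $(n-k)$-dimensional subspace $V$, it yields the desired lower bound, and combining with Step 2 and Step 1 produces
\[
\prod_{j=1}^{n-k}\max_i|x_{j,i}| \;\leq\; \lambda_1\cdots\lambda_{n-k} \;\leq\; D^{-1}\sqrt{\det(A\cdot A^T)}.
\]

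\textbf{Main obstacle.} The substantive input is Vaaler's inequality, which is nontrivial; the rest is routine geometry of numbers. If the goal is a genuinely elementary argument that avoids cube slicing, a natural alternative is induction on $n-k$. The base case $n-k=1$ is exactly Theorem~\ref{BV1}; for the inductive step one applies Theorem~\ref{BV1} to extract a first short solution $X_1$ with $\max_i|x_{1,i}| \leq (D^{-1}\sqrt{\det(AA^T)})^{1/(n-k)}$, then passes to a rank-$(n-k-1)$ lattice (for instance, the image of $\Lambda$ under orthogonal projection onto $X_1^\perp$, or an appropriate quotient) whose covolume is $\det(\Lambda)$ divided by the Euclidean length of $X_1$, and recurses. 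The delicate point here is that the $\ell^\infty$ height does \emph{not} behave well under orthogonal projection, so one must carefully lift a short basis of the projected lattice back to $\Lambda$ while controlling the coordinate maxima --- this is where I expect the real technical effort to sit if Vaaler's theorem is to be bypassed.
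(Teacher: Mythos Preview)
Your proposal is correct and follows essentially the same route as the paper: identify the covolume of the kernel lattice $\Lambda^0=\{X\in\mathbb{Z}^n:AX^T=0\}$ as $D^{-1}\sqrt{\det(AA^T)}$, then apply Minkowski's second theorem together with Vaaler's cube-slicing inequality for $[-1,1]^n\cap\ker A$. The only difference is the linear-algebra tool used for the covolume identity --- the paper works with the column Hermite Normal Form of $A$ (showing $\det(L_C(A))=D$ and then $\det(\Lambda^0)=\sqrt{\det(AA^T)}/\det(L_C(A))$ via Sylvester's determinant identity), whereas you propose the Smith Normal Form together with Cauchy--Binet; both arguments are standard and of comparable length.
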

   To compute $ D $ directly, one has to find
   determinants of all
   $\binom{n}{k}$ square matrices which would take exponential time,
   e.g., when
    $k = (1/2 +\epsilon) n$.
	In this paper, we will prove an efficient verison of Bombieri and Vaaler's
  bounds. We  work on lattices generated by $A$ and the corresponding kernel
  lattice, and resort only to the basic facts in linear algebra.
	
	\begin{definition}
		An (integer) lattice $\Lambda$ is an additive subgroup of $\mathbb{Z}^n$. A family of vectors $v_i\in\Lambda,\,i=1,2,\cdots,k,$ is called a basis of the lattice $\Lambda$ if every vector $v\in\Lambda$ has the unique representation
		\[
		  v=\sum_{i=1}^k c_iv_i,\quad\mbox{for some}\quad c_i\in\mathbb{Z}.
		\]
		Here, we call $k$ and $n$ the rank and the dimension of the lattice
    respectively. We also say that the lattice $\Lambda$ is generated
    by $v_i\in\Lambda,\,i=1,2,\cdots,k$, and call the matrix
    $V=(v_1^T,\cdots,v_k^T)^T$
    with rows $v_i$
    a basis matrix for $\Lambda$.
	\end{definition}

	The determinant of a lattice plays an important role in the lattice theory.
  It is an invariant of a lattice independently of
  the choices of basis matrices.
	\begin{definition}
		Let $\Lambda$ be a lattice with a basis matrix $A$. The determinant of the lattice $\Lambda$ is defined to be
		\[
		  \det(\Lambda)=\sqrt{\det(A\cdot A^T)}.
		\]
		It is also called the volume of the lattice.
	\end{definition}
	
  \section{Our results}
  We now define the kernel lattice. One can contrast it with the better-known
  dual lattice.
	
	\begin{definition}
		The integer solutions of System~\ref{diophantine} forms an additive group.
    It is called the kernel lattice of $ \Lambda = L_R(A)$,
    and it is denoted by $\Lambda^0 $.
	\end{definition}
	
  It is well known that the determinant of the dual lattice equals to the inverse of the
  determinant of the original lattice.
  Furthermore if we take the dual of a dual lattice, we will
  get back to the original lattice. However, it is not
  true for kernel lattices. In the other words, $\Lambda $ may not be equal
  to $( \Lambda^0 )^0$. For example, consider the lattice in $\R^2$ generated
  by the vector $(2,2)$. Its kernel lattice is generated by the vector
  $(1, -1)$, whose kernel lattice is generated by $(1,1)$. Nevertheless
  it is always true that for any integer lattice $\Lambda$,
  \[ \Lambda^0 = (( \Lambda^0 )^0)^0. \]
 First we introduce some notations:
	
	\begin{notation}
		Let $A \in \mathbb{Z}^{k\times n}$ be an integer matrix.
    Denote the lattices generated by rows and columns of $A$ by $L_R(A)$ and $L_C(A)$,
		respectively.
	\end{notation}
	
So the basis matrix of the lattice $ L_R(A)$ is $A$
    if rows of $A$ are linear independent over $\R$.
 To study the determinant of the kernel
  lattice, we define
	\begin{definition}
		Let $\Lambda$ be the lattice with a basis matrix $A$.
    The normalized determinant of $\Lambda$, denoted by $\ndet(\Lambda)$,
    is defined to be
	    	\begin{equation}   
	    	\ndet(\Lambda)=\frac{\det(L_R(A))}{\det(L_C(A))}.
	    	\end{equation}
	\end{definition}
	
	It is easy to see that the definition of normalized determination is
  independent of the choices of basis matrix $A$.
  Furthermore, the definition can be
  extended to $A\in \mathbb{Q}^{k\times n}$ with coefficients in rational
  numbers $\mathbb{Q}$.
	The main contribution of this paper is to relate the determinant of
  kernel lattice with the
  normalized determinant.
	\begin{theorem}\label{detofkernel}
		Let $\Lambda$ be an integer lattice and $\Lambda^0$ be its kernel lattice. Then
		\begin{equation}\label{key}
		\det(\Lambda^0)=\ndet(\Lambda).
		\end{equation}
	\end{theorem}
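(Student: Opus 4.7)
The plan is to exploit the orthogonal decomposition $\R^n = \mathrm{rowspan}(A)\oplus\ker(A)$: since $A$ has full row rank $k$, the row space and the kernel are orthogonal complements of dimensions $k$ and $n-k$, and $L_R(A)$ and $\Lambda^0$ are full-rank integer lattices inside them. I will package both into a single $n\times n$ matrix and compute its determinant in two different ways.

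Let $A$ be a basis matrix for $\Lambda$ and let $B\in\Z^{(n-k)\times n}$ be a basis matrix for $\Lambda^0$. Because each row of $B$ lies in $\ker(A)$, we have $AB^T = 0$. Form the $n\times n$ block matrix
\[ C = \begin{pmatrix} A \\ B \end{pmatrix}. \]
First, $CC^T$ is block diagonal with diagonal blocks $AA^T$ and $BB^T$, so
\[ |\det C| = \sqrt{\det(AA^T)\,\det(BB^T)} = \det(L_R(A))\cdot\det(\Lambda^0). \]
Second, the rows of $C$ generate $L_R(A)+\Lambda^0\subseteq\Z^n$ as a direct sum (the intersection is trivial by orthogonality), and this sublattice has full rank in $\R^n$; hence $|\det C|$ also equals its covolume in $\Z^n$, namely the index $[\Z^n : L_R(A)+\Lambda^0]$.

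To evaluate this index I would use the surjection $\phi:\Z^n\to L_C(A)$, $x\mapsto Ax$, whose kernel is exactly $\Lambda^0$. The induced isomorphism $\Z^n/\Lambda^0 \cong L_C(A)$ maps the image of $L_R(A)$ onto the lattice generated by the columns of $AA^T$, i.e.\ $L_C(AA^T)$. Therefore
\[ [\Z^n:L_R(A)+\Lambda^0] \;=\; [L_C(A):L_C(AA^T)] \;=\; \frac{\det(L_C(AA^T))}{\det(L_C(A))} \;=\; \frac{\det(AA^T)}{\det(L_C(A))}. \]
Equating the two expressions for $|\det C|$ and dividing through by $\det(L_R(A))=\sqrt{\det(AA^T)}$ yields $\det(\Lambda^0)=\det(L_R(A))/\det(L_C(A))=\ndet(\Lambda)$.

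The step I expect to require the most care is the index computation: one must verify that $L_C(AA^T)$ has full rank inside $L_C(A)$ (which follows from $\det(AA^T)\neq 0$, equivalent to $A$ having full row rank) before invoking the covolume-ratio formula $[\Lambda':\Lambda'']=\det(\Lambda'')/\det(\Lambda')$ for full-rank sublattices. Everything else reduces to standard linear algebra once the orthogonal splitting of $\R^n$ induced by $A$ is in place.
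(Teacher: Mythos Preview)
Your argument is correct and takes a genuinely different route from the paper's own proof. The paper proceeds by computing an explicit basis for $\Lambda^0$ via the column Hermite Normal Form of $A$: writing $A\cdot U=(H_A\mid 0)$ for a unimodular $U$, the last $n-k$ columns of $U$ (transposed) give a basis of $\Lambda^0$, and then $\det(\Lambda^0)^2$ is evaluated through a chain of block--matrix identities, including Sylvester's determinant identity $\det(I_s+MN)=\det(I_t+NM)$, to reach $\det(AA^T)/\det(H_A)^2$.

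By contrast, you never need an explicit basis of $\Lambda^0$, nor Sylvester's identity. Your key idea is to stack $A$ over $B$ and read $|\det C|$ two ways: once via the orthogonal block decomposition of $CC^T$, once as the index $[\Z^n:L_R(A)\oplus\Lambda^0]$, which you then identify with $[L_C(A):L_C(AA^T)]$ through the short exact sequence $0\to\Lambda^0\to\Z^n\xrightarrow{\,x\mapsto Ax\,}L_C(A)\to 0$. This is more conceptual and arguably cleaner; the paper's approach, on the other hand, is constructive---it simultaneously exhibits a basis of $\Lambda^0$ and identifies $\det(L_C(A))$ with $\det(H_A)$, which feeds directly into the algorithmic theme of the paper (computing the Bombieri--Vaaler bound in polynomial time). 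Both arrive at the same numerical identity $\det(\Lambda^0)=\sqrt{\det(AA^T)}/\det(L_C(A))$, but yours trades explicit matrix manipulation for a group-theoretic index calculation.
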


     So the normalized determinant can be viewed as the determinant of
     its kernel lattice.
  The following theorem shows that the normalized determinant is invariant under
  the kernel operation.
	 \begin{theorem}\label{main_dual}
	 	Let $A\in \mathbb{Z}^{k\times n}$ and $B\in \mathbb{Z}^{(n-k)\times n}$ be two integer matrices of full row rank such that $A\cdot B^T=0$. Then we have
	 	\[
	 	   \ndet(L_R(A))=\ndet(L_R(B)).
	 	\]
	 \end{theorem}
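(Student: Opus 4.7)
The plan is to invoke Theorem~\ref{detofkernel} twice, with the hypothesis $AB^T=0$ used in between to identify the kernel lattices of $L_R(A)$ and $L_R(B)$ with each other's saturations. Write $\Lambda_A:=L_R(A)$ and $\Lambda_B:=L_R(B)$. The first application of Theorem~\ref{detofkernel} reduces the target identity $\ndet(\Lambda_A)=\ndet(\Lambda_B)$ to the numerical equality $\det(\Lambda_A^0)=\det(\Lambda_B^0)$.

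Next I would identify the two kernel lattices. From $AB^T=0$ together with the full row rank of both matrices, a dimension count forces the rational row span of $A$ to equal $\ker_{\mathbb{Q}}(B)$ (both are $k$-dimensional and one contains the other), and symmetrically the rational row span of $B$ equals $\ker_{\mathbb{Q}}(A)$. Intersecting with $\mathbb{Z}^n$ gives $\overline{\Lambda_A}=\Lambda_B^0$ and $\overline{\Lambda_B}=\Lambda_A^0$, where the overline denotes saturation in $\mathbb{Z}^n$. Since $\Lambda_A^0$ spans the orthogonal complement of the $\mathbb{Q}$-row span of $\Lambda_A$, one also obtains $(\Lambda_A^0)^0=\overline{\Lambda_A}$ directly; this is a rephrasing of the triple-kernel identity $\Lambda^0=((\Lambda^0)^0)^0$ highlighted in the paper. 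Combining these, $(\Lambda_A^0)^0=\Lambda_B^0$.

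Applying Theorem~\ref{detofkernel} a second time, now to the integer lattice $\Lambda_A^0$ itself, produces $\det((\Lambda_A^0)^0)=\ndet(\Lambda_A^0)$, hence $\det(\Lambda_B^0)=\ndet(\Lambda_A^0)$. The assertion is therefore reduced to the single sub-statement $\det(\Lambda_A^0)=\ndet(\Lambda_A^0)$.

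The main obstacle is this last reduction, which is the claim that $\ndet$ agrees with $\det$ on every saturated integer lattice. Because $\Lambda_A^0$ is cut out of $\mathbb{Z}^n$ by integer linear equations, it is automatically saturated, so any basis matrix $V\in\mathbb{Z}^{(n-k)\times n}$ of $\Lambda_A^0$ has Smith Normal Form with every invariant factor equal to $1$, and thus extends to an element of $GL_n(\mathbb{Z})$. A Laplace expansion along the rows of $V$ inside this extension shows that the greatest common divisor of the $(n-k)\times(n-k)$ minors of $V$ equals $1$; by the standard identity this GCD is $\det(L_C(V))$, whence $\ndet(\Lambda_A^0)=\det(\Lambda_A^0)/\det(L_C(V))=\det(\Lambda_A^0)$. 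Once this sublemma is in place, the chain
\[
\ndet(\Lambda_A)=\det(\Lambda_A^0)=\ndet(\Lambda_A^0)=\det(\Lambda_B^0)=\ndet(\Lambda_B)
\]
closes the proof.
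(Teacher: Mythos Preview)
Your argument is correct and follows the same overall architecture as the paper: both proofs hinge on Theorem~\ref{detofkernel} together with the identification $(L_R(B))^0=(\Lambda^0)^0$, which follows because $B$ and any basis of $\Lambda^0$ share the same rational row span. The one substantive difference lies in how the loop is closed. The paper invokes Lemma~\ref{normdet}(2) directly, obtaining $\ndet(\Lambda)=\det((\Lambda^0)^0)$ from the explicit basis $H_A^{-1}A$ for $(\Lambda^0)^0$. You instead route through an extra application of Theorem~\ref{detofkernel} (to $\Lambda_A^0$) together with your sublemma that every saturated integer lattice satisfies $\ndet=\det$, i.e.\ that a primitive basis matrix has column-lattice determinant~$1$. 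That sublemma is correct, and your proof via extension to a unimodular matrix plus Laplace expansion (or, more directly, by observing that the column HNF of such a matrix is $(I\mid 0)$) is standard; combined with Theorem~\ref{detofkernel} it re-derives the content of Lemma~\ref{normdet}(2) without the explicit HNF computation. The paper's route is shorter because Lemma~\ref{normdet}(2) is already on the shelf, but your detour has the merit of isolating the clean structural fact that $\det(L_C(V))=1$ is equivalent to saturation, which is worth stating on its own.
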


     Now applying Vaaler's cube slicing
     inequality~\cite{vaaler79} and Minkowski's theorems~\cite{davenport39} on
     the integer solution lattice of Diophantine System~\ref{diophantine},
	
	\begin{theorem}
		Suppose that $A\in \mathbb{Z}^{k\times n}$ has full row rank $k$ and
    $\Lambda$ is the lattice generated by rows of $A$. System~\ref{diophantine}
    has a nonzero integer solution $X=(x_1,\cdots,x_n)\in \mathbb{Z}^n$ such that
		\[
      \max_i|x_i| \leq \ndet(\Lambda)^{\frac{1}{n-k}}.
		\]
	Moreover, System~\ref{diophantine} has $n-k$ linear independent integer solutions $X_j=(x_{j,1},\cdots,x_{j,n})\in \mathbb{Z}^n,\,j=1,\cdots,n-k$ such that
		\[
      \prod_j\max_i|x_{j,i}| \leq \ndet(\Lambda).
		\]
	\end{theorem}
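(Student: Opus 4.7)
The plan is to reinterpret the integer solutions of System~\ref{diophantine} as the kernel lattice $\Lambda^0$, and then to apply Minkowski's theorems combined with Vaaler's cube slicing inequality to the $\ell^{\infty}$ ball cut by the real span of $\Lambda^0$.

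Because $A$ has full row rank $k$, the real null space $V \subseteq \R^n$ of $A$ is $(n-k)$-dimensional, and $\Lambda^0$ is a sublattice of $\Z^n \cap V$ that spans $V$ over $\R$; thus $\Lambda^0$ has rank $n-k$. I would view $\Lambda^0$ as a full-rank lattice in $V$ equipped with the inherited Lebesgue measure, so that Theorem~\ref{detofkernel} supplies the key identity $\det(\Lambda^0)=\ndet(\Lambda)$.

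For any $c>0$, set $K_c := [-c,c]^n \cap V$, viewed as a symmetric convex body in $V$. Vaaler's cube slicing inequality gives $\mathrm{vol}_{n-k}([-\tfrac12,\tfrac12]^n \cap V) \geq 1$, so after scaling $\mathrm{vol}_{n-k}(K_c) \geq (2c)^{n-k}$. Since $K_c$ is precisely the set of vectors in $V$ with $\ell^{\infty}$-norm at most $c$, short $\ell^{\infty}$-vectors in $\Lambda^0$ coincide with short vectors with respect to the gauge of $K_1$.

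For the first bound I would choose $c=\ndet(\Lambda)^{1/(n-k)}$, so that $\mathrm{vol}_{n-k}(K_c) \geq 2^{n-k}\det(\Lambda^0)$, and apply Minkowski's first theorem in $V$ to produce a nonzero $X \in \Lambda^0 \cap K_c$. For the second bound, let $\lambda_1 \leq \cdots \leq \lambda_{n-k}$ be the successive minima of $\Lambda^0$ with respect to $K_1$; Minkowski's second theorem combined with the Vaaler bound on $\mathrm{vol}_{n-k}(K_1)$ yields
\[
  \prod_{j=1}^{n-k} \lambda_j \;\leq\; \frac{2^{n-k}\det(\Lambda^0)}{\mathrm{vol}_{n-k}(K_1)} \;\leq\; \frac{2^{n-k}\ndet(\Lambda)}{2^{n-k}} \;=\; \ndet(\Lambda),
\]
and the definition of successive minima produces $n-k$ linearly independent vectors $X_j \in \Lambda^0$ with $\|X_j\|_\infty \leq \lambda_j$, giving the product inequality.

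The step I expect to require the most care is matching the measure-theoretic conventions: Vaaler's inequality bounds the intrinsic $(n-k)$-dimensional volume of $K_1$ inside $V$, and this must agree with the Lebesgue measure on $V$ used when computing $\det(\Lambda^0)$ and when invoking Minkowski's theorems. Once this bookkeeping is fixed, the argument is a direct combination of Theorem~\ref{detofkernel}, Vaaler's inequality, and the classical Minkowski theorems, with no further subtlety.
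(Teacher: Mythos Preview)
Your proposal is correct and follows exactly the approach the paper indicates: the paper does not spell out a proof here but simply says to apply Vaaler's cube slicing inequality~\cite{vaaler79} and Minkowski's theorems~\cite{davenport39} to the kernel lattice $\Lambda^0$, together with Theorem~\ref{detofkernel} to identify $\det(\Lambda^0)$ with $\ndet(\Lambda)$. Your writeup fills in these details faithfully, and your caveat about matching the induced $(n-k)$-dimensional Lebesgue measure on $V$ in both Vaaler's bound and the definition of $\det(\Lambda^0)$ is the only point requiring care, as you note.
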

	
	Finally, to recover Bombieri and Vaaler's theorems~\ref{BV1} and~\ref{BV2}, we only need to show that
	\begin{theorem}\label{HNFandD}
    Suppose that $A\in \mathbb{Z}^{k\times n}$ has full row rank $k$.
    Denote by $D$ the G.C.D. of determinants of all $k\times
    k$ minors of $A$. Then we have
	 $$\det(L_C( A ))=D.$$
	\end{theorem}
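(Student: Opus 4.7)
The plan is to show that both sides of the claimed identity are invariant under right multiplication of $A$ by unimodular matrices $U \in GL_n(\Z)$, and then reduce $A$ to Hermite Normal Form, where the identity becomes transparent by inspection.

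For the invariance step, the left side is the easier of the two: since $U$ and $U^{-1}$ both have integer entries, every column of $AU$ is an integer combination of columns of $A$ and vice versa, hence $L_C(AU) = L_C(A)$ as subgroups of $\Z^k$. For the right side, the Cauchy--Binet formula applied to any $k \times k$ submatrix of $AU$ indexed by columns $I$ gives
\[
  \det\bigl((AU)_I\bigr) \;=\; \sum_J \det(A_J)\,\det(U_{J,I}),
\]
expressing each $k \times k$ minor of $AU$ as an integer combination of $k \times k$ minors of $A$. Hence $D(A)$ divides $D(AU)$, and applying the same argument to $A = (AU)U^{-1}$ gives the reverse divisibility, so $D(A) = D(AU)$.

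Given this invariance, I invoke the Hermite Normal Form theorem, which applies precisely because $A$ has full row rank $k$, to choose $U \in GL_n(\Z)$ with $AU = [H \mid 0]$, where $H \in \Z^{k \times k}$ is upper triangular with positive diagonal entries. The columns of $H$ are linearly independent and form a basis for $L_C(H) = H \cdot \Z^k$, so $\det(L_C([H \mid 0])) = |\det H|$. Meanwhile, any $k \times k$ submatrix of $[H \mid 0]$ that includes one of the trailing zero columns has vanishing determinant, so the only surviving minor is $\det H$ itself, forcing $D([H \mid 0]) = \det H$. Combining with the invariance step yields $\det(L_C(A)) = \det H = D$, as desired.

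The argument is essentially linear-algebraic bookkeeping, and I do not foresee a substantive obstacle. The one point that demands care is verifying the Cauchy--Binet identity above and checking that each of its summands is indeed an integer, so that the divisibility conclusion is genuine; this is standard but easy to state loosely. An equivalent approach would be to use Smith Normal Form in place of HNF (reducing to a diagonal matrix $\mathrm{diag}(d_1,\ldots,d_k)$ padded by zero columns), which makes the final inspection even simpler but requires the additional invariance under \emph{left} multiplication by $GL_k(\Z)$; I prefer the HNF route because it needs invariance only on the right.
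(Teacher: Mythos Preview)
Your proof is correct and follows essentially the same approach as the paper: both establish that $\det(L_C(\cdot))$ and $D(\cdot)$ are invariant under right multiplication by unimodular matrices, then reduce to column Hermite Normal Form $(H\mid 0)$, where both quantities visibly equal $\det H$. The only cosmetic difference is that you invoke Cauchy--Binet to express each $k\times k$ minor of $AU$ as an integer combination of minors of $A$, whereas the paper obtains the same divisibility by decomposing $U$ into elementary column operations and checking each type separately.
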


	\begin{remark}
    From the computational aspect, there is a polynomial time algorithm to
    compute $\det(L_C (A))$, while computing $D$ takes exponential time as one needs
    to compute determinants of $\binom{n}{k}$ $k\times k$-minors of $A$ and then
    take the G.C.D.. Using G.C.D. of determinants of all full-rank minors of $A$
    to bound the solution of \ref{diophantine}
    was rediscovered in~\cite{brudern-dietmann}, which extends the special case
    where the G.C.D is one in~\cite{health-brown}.
    \end{remark}
 	
	\section{Proofs}
	In this section, we give proofs of Theorems~\ref{main_dual}
  and~\ref{HNFandD}. We first review the definition of the Hermite Normal Form
  and prove a simple technical lemma.
  	\begin{definition}\label{HNFdef}
		For any matrix $M\in \mathbb{Z}^{m\times n}$, there is a square unimodular matrix $U\in \mathbb{Z}^{n\times n}$ such that $H=M\cdot U$ is of the form:
	\begin{itemize}
		\item $H$ is lower triangle, i.e., $h_{i,j}=0$ for all $i<j$.
		\item The leading coefficient $h_{i,i}$ of a nonzero column is positive and $h_{i,i}>h_{j,i}$ for all $j>i$.
		\item Zero columns are arranged on the right.
	\end{itemize}
    We call $H$ is the column Hermite Normal Form (HNF) of $M$.
	\end{definition}

	\begin{lemma}\label{normdet}
		Let $\Lambda$ be the lattice with a basis matrix $A$ and $\Lambda^0$ be its
    kernel lattice. Suppose that columns of $A$ has the HNF $(H_A|0_{k\times (n-k)})$, i.e.,
		\[
		   A\cdot U=(H_A|0_{k\times (n-k)})
		\]
	    for some unimodular matrix $U=\left(\begin{array}{cc}
	U_{1,1}&U_{2,1}\\
	U_{1,2}&U_{2,2}
	\end{array}\right)\in \mathbb{Z}^{n\times n}$, where $(0_{k\times (n-k)})$ denotes the $k\times (n-k)$ zero matrix
and blocks $U_{1,1}\in\mathbb{Z}^{k\times k},\,U_{1,2}\in\mathbb{Z}^{(n-k)\times k},\,U_{2,1}\in\mathbb{Z}^{k\times (n-k)},\,U_{2,2}\in\mathbb{Z}^{(n-k)\times (n-k)}$, respectively. Then we have
	    \begin{enumerate}
	    	\item The determinant of column lattice $L_C(A)$ is
	    	$$
	    	\det(L_C(A))=\det(H_A).
	    	$$
	    	Moreover, it is invariant under multiplication by an invertible integer matrix on the left of $A$. So, it is independent of the choices of basis matrices for $\Lambda$.
      \item The matrix $H_A^{-1}\cdot A$ is a basis matrix of the kernel
    lattice of $\Lambda^0$, and
	    	\begin{equation*} 
          \ndet(\Lambda)= \det((\Lambda^0)^0).
	    	\end{equation*}
      \item Rows of $(U^T_{2,1},U^T_{2,2})$ form a basis for the lattice $\Lambda^0$.
	    \end{enumerate}
	\end{lemma}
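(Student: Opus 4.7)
The plan is to read everything off the HNF identity $AU=(H_A\mid 0)$, using only that right multiplication by a unimodular matrix preserves the column lattice and that $U^{-1}$ is again an integer matrix. I would prove the three parts in the order (1), (3), (2), since (2) leans on (3).

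For (1), since $U$ is unimodular, $L_C(A)=L_C(AU)=L_C(H_A\mid 0)=L_C(H_A)$; because $H_A$ is $k\times k$, lower triangular with positive diagonal, $\sqrt{\det(H_A H_A^T)}=|\det H_A|=\det H_A$. For invariance under left multiplication by a unimodular $M\in\Z^{k\times k}$, I apply the same $U$ to get $(MA)U=(MH_A\mid 0)$; the column lattice of the square matrix $MH_A$ has determinant $|\det(MH_A)|=|\det M|\cdot|\det H_A|=\det H_A$, which is unaffected by any further HNF reduction.

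For (3), I read the HNF block-wise: the last $n-k$ columns of $U$, namely $\binom{U_{2,1}}{U_{2,2}}$, are annihilated by $A$, so the rows of $(U_{2,1}^T\mid U_{2,2}^T)$ lie in $\Lambda^0$. Conversely, given $x\in\Lambda^0$, I use unimodularity of $U$ to write $x^T=Uv$ with $v\in\Z^n$; then $(H_A\mid 0)v=0$, and invertibility of $H_A$ forces the first $k$ entries of $v$ to vanish, displaying $x$ as an integer combination of rows of $(U_{2,1}^T\mid U_{2,2}^T)$. Linear independence is automatic since these are transposes of columns of a unimodular matrix.

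For (2), partition $U^{-1}=\binom{W_1}{W_2}$ with $W_1\in\Z^{k\times n}$. From $A=(H_A\mid 0)U^{-1}$ one reads off $H_A^{-1}A=W_1$, which is integer. Setting $B=(U_{2,1}^T\mid U_{2,2}^T)$, the block identity $U^{-1}U=I$ yields $BW_1^T=0$ and $BW_2^T=I_{n-k}$. The first says the rows of $W_1$ lie in $(\Lambda^0)^0$; the second, applied to any $y\in(\Lambda^0)^0$ expanded as $y=cW_1+dW_2$ with integer row vectors $c,d$ (possible because $U^{-1}$ is unimodular), forces $d=0$, so $W_1$ is a basis matrix of $(\Lambda^0)^0$. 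Then $\det((\Lambda^0)^0)=\sqrt{\det(W_1W_1^T)}=|\det H_A|^{-1}\sqrt{\det(AA^T)}=\det(L_R(A))/\det(L_C(A))=\ndet(\Lambda)$. The only real obstacle is bookkeeping the block indices and transposes cleanly; once the $UU^{-1}=I=U^{-1}U$ block identities are written out, every claim in the lemma falls out immediately.
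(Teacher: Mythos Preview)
Your argument is correct and matches the paper's direct approach: for (1) both use that the HNF columns generate $L_C(A)$; for (3) both solve $(H_A\mid 0)(U^{-1}X^T)=0$ and read off the last $n-k$ columns of $U$; and for (2) your observation that $H_A^{-1}A=(I_k\mid 0)U^{-1}=W_1$ consists of rows of a unimodular matrix is exactly the paper's primitivity argument, which you have made more explicit via the block identities $BW_1^T=0$, $BW_2^T=I_{n-k}$. The only notable difference is that the paper also offers a second, conceptually distinct proof of (2) via the dual lattice of $L_C(A)$, identifying $\{Y\in\R^k:YA\in\Z^n\}$ with the dual of $L_C(A)$ and hence with rows of $H_A^{-1}$; your route avoids dual lattices entirely but is otherwise equivalent to the paper's direct proof.
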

	
\begin{remark}
  We call $(\Lambda^0)^0$ the normalized lattice of $\Lambda$, whose
  basis matrix can be calculated as $H_A^{-1}A$.
	\end{remark}

	\begin{proof}
		(1) From the definition of HNF, it is clear that columns of $H_A$ form a basis for the column lattice $L_C(A)$. So it follows the equality $\det(L_C(A))=\det(H_A)$. For the later statement, one can deduce it from Theorem~\ref{HNFandD}.
		
    (2)
	One proof uses dual lattice, which explains where $H_A^{-1}$ comes from. Since rows of $A$ form an $\mathbb{R}$-basis for the kernel space of $B$, the kernel lattice of $B$ is
	\[
	\mathrm{Span}(A)\cap \mathbb{Z}^n,
	\]
	where $\mathrm{Span}(A)$ denotes the $\mathbb{R}$-vector space generated by rows of $A$. To determine the kernel lattice of $B$, it only needs to decide which vector $Y=(y_1,\cdots,y_k)\in \mathbb{R}^k$ satisfies
	\[
	(y_1,\cdots,y_k)\cdot A\in \mathbb{Z}^n.
	\]
	The set of such vectors $Y$'s forms the dual lattice of column lattice $L_C(A)$.
  And we know that columns in $H_A$ is a basis for $L_C(A)$. So by the formula
  for basis of dual lattice~\cite{micciancio-goldwasser}, $H_A^{-1}$ is a basis
  matrix for the dual lattice. Hence, $H_A^{-1}\cdot A$ is a basis matrix
  of the kernel lattice of $\Lambda^0$, and
  \begin{align*}
	 & \det((\Lambda^0)^0) = \det(L_R( H_A^{-1}\cdot A))
  = \sqrt{\det(H_A^{-1}A\cdot (H_A^{-1}\cdot A)^T)}\\
  =& \sqrt{\det(H_A^{-1}A\cdot A^T \cdot (H_A^{-1})^T)}
  =\frac{\sqrt{\det(A\cdot A^T)}}{\det(H_A)}.
  \end{align*}

	Another proof is more direct. Since $ A\cdot U=(H_A|0_{k\times (n-k)})$, we have
	\[
	H_A^{-1}\cdot A=(I_k|0_{k\times (n-k)})\cdot U^{-1}\in
	\mathbb{Z}^{k\times n},
	\]
	where $I_k$ is the $k\times k$ identity matrix. As $U^{-1}$ is unimodular,
  rows of $H_A^{-1}\cdot A$ form a subset of a  basis for lattice $\mathbb{Z}^n$,
  which is called primitive in
  \cite{cassels}.
  The lattice generated by rows of $H_A^{-1}\cdot A$ can not be a
  proper sublattice of another (integer) lattice in the kernel space of $B$. So
  rows of $H_A^{-1}\cdot A$ form a basis of the kernel lattice of $B$.
	
 (3) Diophantine System~\ref{diophantine} is equivalent to
	\[
	(A\cdot U)\cdot (U^{-1}\cdot X^T)=0,
\]
or
\[ (H_A|0_{k\times (n-k)})\cdot(U^{-1}\cdot X^T)=0.
	\]
	Therefore, we get a $\mathbb{R}$-basis matrix for the solution space of $A$
	\[
	C=(0_{(n-k)\times k}|I_{n-k})\cdot U^T=(U^T_{2,1},U^T_{2,2}),
	\]
	where $I_{n-k}$ denotes the $(n-k)\times (n-k)$ identity matrix. Using the same argument in the proof of
the statement (2), one can prove that $$C=(U^T_{2,1},U^T_{2,2})$$ indeed form a basis matrix for the lattice $\Lambda^0$.
    \end{proof}

	Now, we prove Theorems~\ref{detofkernel}, ~\ref{main_dual} and~\ref{HNFandD}.

\begin{proof}[Proof of Theorem~\ref{detofkernel}]
		Without loss of generality, we may assume the first $k$ columns $A_1$ of
    $A=(A_1,A_2)$ are linearly independent. Suppose that $(H_A|0_{k\times (n-k)})$ is the column HNF of $A$ and $U=\left(\begin{array}{cc}
	U_{1,1}&U_{2,1}\\
	U_{1,2}&U_{2,2}
	\end{array}\right)$ is the unimodular transformation matrix. That is,
	\[
	\begin{cases}
	A_1\cdot U_{1,1}+A_2\cdot U_{1,2}=H_A\\
	A_1\cdot U_{2,1}+A_2\cdot U_{2,2}=0_{k\times (n-k)}.
	\end{cases}
	\]
	By Lemma~\ref{normdet}(3) we get a basis matrix for the kernel lattice $\Lambda^0$
	\[
	C=(U^T_{2,1},U^T_{2,2}).
	\]
    So
	\[
	\det(\Lambda^0)^2=\det(C\cdot C^T)=\det(U^T_{2,1}\cdot U_{2,1}+U^T_{2,2}\cdot U_{2,2}).
	\]
	From the assumption that $\det(A_1)\neq 0$, we have
	\[
	U_{2,1}=-A_1^{-1}\cdot A_2\cdot U_{2,2}.
	\]
	So
	\begin{align*}
	\det(\Lambda^0)^2&=\det(U^T_{2,2}(A_1^{-1}\cdot A_2)^T\cdot A_1^{-1}\cdot A_2\cdot U_{2,2}+U^T_{2,2}\cdot U_{2,2})\\
	&=\det(U_{2,2})^2\det(I_{n-k}+(A_1^{-1}\cdot A_2)^T\cdot A_1^{-1}\cdot A_2)\\
               &=\det(U_{2,2})^2\det(I_{k}+A_1^{-1}\cdot A_2\cdot (A_1^{-1}\cdot A_2)^T)
                 \stepcounter{equation}\tag{\theequation}\label{eq1}\\
	&=\det(U_{2,2})^2\det(I_{k}+A_1^{-1}\cdot A_2\cdot A_2^T\cdot (A_1^{T})^{-1})\\
	&=\frac{\det(U_{2,2})^2}{\det(A_1)^2}\det(A_1\cdot A_1^T+ A_2\cdot A_2^T)\\
	&=\frac{\det(U_{2,2})^2}{\det(A_1)^2}\det(A\cdot A^T)\\
	&=\frac{\det(A\cdot A^T)}{\det(H_A)^2}
                 \stepcounter{equation}\tag{\theequation}\label{eq2}\\
	&=\ndet(\Lambda)^2.\stepcounter{equation}\tag{\theequation}\label{eq3}\\
	\end{align*}
 Equality~\ref{eq1} follows from Sylvester's determinant identity (see
  Remark~\ref{exchange}). Equality~\ref{eq2} follows from taking
  determinants of both sides of the equality:
	\[
	\left(\begin{array}{cc}
	A_1&A_2\\
	0&I_{n-k}
	\end{array}\right)\left(\begin{array}{cc}
	U_{1,1}&U_{2,1}\\
	U_{1,2}&U_{2,2}
	\end{array}\right)=\left(\begin{array}{cc}
	H_A&0\\
	U_{1,2}&U_{2,2}
	\end{array}\right),
	\]
	and $U=\left(\begin{array}{cc}
	U_{1,1}&U_{2,1}\\
	U_{1,2}&U_{2,2}
	\end{array}\right)$ is unimodular. And Equality~\ref{eq3} follows from the definition and Lemma~\ref{normdet}(1).
\end{proof}

\begin{remark}
  \label{exchange}
  The Sylvester's determinant identity states that
		for any matrices $A\in \mathbb{R}^{s\times t}$ and $B\in \mathbb{R}^{t\times s}$
		\[
		\det(I_s+A\cdot B)=\det(I_t+B\cdot A).
		\]
    The proof can be found in many places, e.g. \cite{AAM96}.
\end{remark}

\begin{proof}[Proof of Theorem~\ref{main_dual}]
 Denote $\Lambda=L_R(A)$. By Theorem~\ref{detofkernel},
 \[
   \ndet(L_R(B))=\det((L_R(B))^0)=\det((\Lambda^0)^0).
 \]

 On the other hand, we have already showed that $\ndet(\Lambda)$ equals the determinant of the kernel lattice
 of $\Lambda^0$ by statements (1) and (2) of Lemma~\ref{normdet}, i.e.,
 \[
  \ndet(\Lambda)=\det((\Lambda^0)^0).
 \]
 So
 \[\ndet(L_R(A))=\ndet(L_R(B)).
 \]
\end{proof}

   \begin{proof}[Proof of Theorem~\ref{HNFandD}]
     For any integer matrix $M \in \Z^{k\times n}$, let $D(M)$ denote the
     G.C.D of the determinants of all $k \times k$ minors.
     We claim that if an integer matrix $M'$ is the result of
     an integer elementary column operation from $M$, then $ D(M) | D(M')$.
     It is obviously true if the operation is a column switching, or a column
     multiplication (by an integer). The remaining case is a column addition.
     In this case, the determinant of a $k\times k$ minor of $M'$
     is an integral linear combination of determinants of some
     $k \times k$ minors of $M$. So again we have $ D(M) | D(M')$.

     For an integer matrix $A \in \Z^{k\times n}$, after a sequence
     of invertible integer column operations, we can obtain of a matrix of
     form $(H_A|0_{k\times (n-k)}).$ We have $D ( (H_A|0_{k\times (n-k)}) ) =
     \det(L_C (A))$,
     and
     \[ D(A) |  D ( (H_A|0_{k\times (n-k)}) ) \mbox{\ and\ }
     D ( (H_A|0_{k\times (n-k)}) ) | D(A).\]
     So $ D(A) = \det(L_C (A))$.
   \end{proof}

	\section*{Acknowledgment}
	 Jun Zhang is supported by the National Natural Science Foundation of China No. 11601350, by Scientific Research Project of Beijing Municipal Education Commission under Grant No. KM201710028001, by Beijing outstanding talent training program No.2014000020124G140 and by China Scholarship Council. He would like to thank University of Oklahoma for the hospitality during his visit. Qi Cheng is supported by China 973 program No. 2013CB834201 and by US NSF No. CCF-1409294.


\begin{thebibliography}{999999}
		\bibitem[AAM$96$]{AAM96}
        A.G. Akritas, E.K. Akritas and G.I. Malaschonok, Various proofs of Sylvester's (determinant) identity,
         {\it Mathematics and Computers in Simulation} 42(4), 585-593, 1996.


		\bibitem[BV$83$]{bombieri-vaaler}
		E. Bombieri and J. Vaaler,
		On Siegel's lemma, {\it Inventiones Mathematicae} 73, 11-32, 1983.
		\bibitem[BR$14$]{brudern-dietmann} J. Br\"udern and R. Dietmann, Random Diophantine equations, I, {\it Advances in Mathematics} 256, 18-45, 2014.
		\bibitem[Ca$57$]{cassels}
		J. Cassels, {\it An introduction to Diophantine approximation,} Cambridge Tracts in Mathematics and Mathematical Physics, No. 45, Cambridge University Press, New York, 1957.
		 \bibitem[Da$39$]{davenport39}
		H. Davenport, Minkowski's inequality for the minima associated with a convex body, {\it Quarterly Journal of Mathematics Oxford} 10, 119-121, 1939.
	    \bibitem[HB$84$]{health-brown} D.R. Heath-Brown,  Diophantine approximation with square-free numbers, {\it Mathematische Zeitschrift} 187(3), 335-44, 1984.
		\bibitem[MG$02$]{micciancio-goldwasser}
		D. Micciancio, S. Goldwasser, {\it Complexity of Lattice Problems: A Cryptographic Perspective,} Kluwer Academic Publishers, Boston, Massachusetts, 2002.
		\bibitem[MW$01$]{micciancio-warinschi}
		D. Micciancio and B. Warinschi, A linear space algorithm for computing the hermite normal form. In {\it Proceedings of the 2001 International
			Symposium on Symbolic and Algebraic Computation} (ISSAC~'01), ACM, New York, NY, USA, 231-236, 2001.
        \bibitem[Si$29$]{siegel}
        C.L. Siegel, {\it \"Uber einige Anwendungen diophantischer Approximationen}. Abhandlungen der Preu{\ss}ischen Akademie der Wissenschaften. Physikalisch-mathematische Klasse 1929, Nr.1. 
		\bibitem[Va$79$]{vaaler79}
     J. Vaaler, A geometric inequality with applications to linear forms, {\it Pacific Journal of Mathematics} 83, 543-553, 1979.

	\end{thebibliography}
\end{document}